\@date \else {\vskip3ex \centering\footnotesize\@date\par\vskip1ex}\fi
\else \@footnotetext{\@setdate}\fi}
\newcommandx{\huom}[2][1=]{\todo[linecolor=red,backgroundcolor=red!10,bordercolor=red,#1]{#2}}
\newcommand{\ve}{\varepsilon}
\newcommand{\N}{\mathbb{N}}
\newcommand{\C}{\mathbb{C}}
\newcommand{\R}{\mathbb{R}}
\newcommand{\HH}{\mathbb{H}}
\newcommand{\Ss}{\mathbb{S}}
\newcommand{\abs}[1]{\lvert #1 \rvert}
\newcommand{\ang}[1]{\left\langle #1 \right\rangle}
\newcommand{\loc}{{\rm loc}}
\newcommand{\p}{\partial}
\newcommand{\Int}{\text{Int }}
\DeclareMathOperator\dist{dist}
\DeclareMathOperator\dv{div}
\newcommand{\an}{\sphericalangle}
\newcommand{\pinf}{\partial_{\infty}}
\DeclareMathOperator\vol{vol}
\numberwithin{equation}{section}
\theoremstyle{plain}
\newtheorem{thm}{Theorem}[section]
\newtheorem{prob}[thm]{Problem}
\newtheorem*{thm*}{Theorem}
\theoremstyle{definition}
\newtheorem{defin}[thm]{Definition}
\author{Esko Heinonen}
\address{E. Heinonen,
Departamento de Geometr\'ia y Topolog\'ia, 
Universidad de Granada, 18071 Granada, Spain.
}
\email{ea.heinonen@gmail.com}
\thanks{E.H. supported by a grant of the Finnish Academy of Science and Letters.}
\keywords{minimal surfaces, asymptotic Dirichlet problem, Hadamard manifolds}
\title[Asymptotic Dirichlet problem]{Survey on the asymptotic Dirichlet problem for the minimal surface equation}
\begin{document}

\maketitle

\begin{abstract}
We give a survey on the development of the study of the asymptotic Dirichlet problem for the minimal surface equation on Cartan-Hadamard manifolds. Part of this survey is based on the introductory part of the doctoral dissertation \cite{He-thesis} of the author. The paper is organised as follows. First we introduce Cartan-Hadamard manifolds and the concept of asymptotic Dirichlet problem, then discuss about the development of the results and describe the methods used in the proofs. In the end we mention some results about the nonsolvability of the asymptotic Dirichlet problem.
\end{abstract}
%


\section{Preliminaries}

\subsection{Cartan-Hadamard manifolds}

A \emph{Cartan-Hadamard} (also \emph{Ha\-damard}) manifold $M$ is a complete simply connected Riemannian manifold whose all sectional curvatures satisfy
 	\[
	K_M \le 0.
	\]
 The most simple examples of such manifolds are the Euclidean space $\R^n$, with zero curvature, and the hyperbolic space $\HH^n$, with constant negative curvature $-1$. The name of these manifolds has its origin in the Cartan-Hadamard theorem which states that the exponential map is a diffeomorphism in the whole tangent space at every point of $M$. 
 
 Another important examples of Cartan-Hadamard manifolds are given by the rotationally symmetric model manifolds with radial curvature function $f$ satisfying $f''\ge0$. We recall that a model manifold is $\R^{n}$ equipped with the rotationally symmetric metric that can be written as
 	\[
	g = dr^2 + f(r)^2d\vartheta^2,
	\]
 where $r$ (as in the rest of the paper) is the distance to a pole $o$ and $d\vartheta$ is the standard metric on the unit sphere $\Ss^{n-1}$. To justify the requirement $f''\ge0$, we note that the sectional curvatures of a model manifold can be obtained from the radial curvature function, namely we have 
 	\begin{equation}\label{rotsym_sect_for}
	K_{M_f}(P_x) = -\frac{f''\big(r(x)\big)}{f\big(r(x)\big)} \cos^2 \alpha + \frac{1-f'\big(r(x)\big)^2}
	{f\big(r(x)\big)^2}\sin^2\alpha,
	\end{equation}
where $\alpha$ is the angle between $\nabla r(x)$ and the 2-plane $P_x\subset T_xM$. In the case of the radial sectional curvature the formula simplifies to 
	\[
	K_{M_f} = -\frac{f''}{f}.
	\]
For the verification of these formulae one could see e.g. \cite{Va_lic}, or \cite{BO} where more general formula was given for the sectional curvatures of warped product manifolds.

The radial curvature function $f$ is an example of solutions to the Jacobi equation that is defined as follows. Given a smooth function $k\colon [0,\infty)\to[0,\infty)$ we denote by $f_k\colon [0,\infty) \to \R$ the smooth non-negative solution to the initial value problem (Jacobi equation)
	\[
	\begin{cases}
	f_k(0)=0, \\
	f_k'(0) = 1,\\
	f_k'' = k^2 f_k.
	\end{cases}
	\]
These functions play an important role in estimates involving curvature bounds since the resulting model manifolds can be used in various comparison theorems, e.g. Hessian and Laplace comparison (see \cite{GW}).


 \subsection{Asymptotic Dirichlet problem on Cartan-Hadamard manifolds}\label{intro_ADP}

 Cartan-Hadamard manifolds can be compactified by adding the \emph{asymptotic boundary} (also 
 \emph{sphere at infinity}) $\pinf M$ and equipping the resulting space $\bar M \coloneqq M \cup \pinf M$
 with the \emph{cone topology}. This makes $\bar M$ homeomorphic to the closed unit ball. 
 The asymptotic boundary $\pinf M$ consists of equivalence classes of geodesic 
 rays under the equivalence relation 
 	\[
	\gamma_1 \sim \gamma_2 \quad \text{if} \quad \sup_{t\ge0} \dist\big(\gamma_1(t),\gamma_2(t)\big) <\infty.
	\] 
Equivalently it can be considered as the set of geodesic rays emitting from a fixed point $o\in M$, when each ray corresponds to a point on the unit sphere of $T_oM$, and this justifies the name sphere at infinity.
 
The basis for the cone topology in $\bar M$ is formed by cones
	\[
	C(v,\alpha) \coloneqq \{y\in M\setminus \{x\} \colon \an(v,\dot\gamma_0^{x,y}) < \alpha \},
	\quad v\in T_xM, \, \alpha>0,
	\]
truncated cones
	\[
	T(v,\alpha,R) \coloneqq C(v,\alpha) \setminus \bar B(x,R), \quad R>0,
	\]
and all open balls in $M$. Here $\gamma^{x,y}$ denotes the unique geodesic joining $x$ to $y$, $\dot\gamma_0$ is the initial unit vector of geodesic $\gamma$, and $\an(\cdot,\cdot)$ the angle between two vectors. Cone topology was first introduced by P. Eberlein and B. O'Neill in \cite{EO}.

 Now we can define the \emph{asymptotic Dirichlet problem} (also \emph{Dirichlet problem at infinity}). Even though we will consider the minimal surface equation 
 	\begin{equation}\label{mingraph-eq}
	\dv \frac{\nabla u}{\sqrt{1+|\nabla u|^2}} = 0,
	\end{equation}
 we formulate the problem for a general quasilinear elliptic operator $Q$. Concerning terminology, the minimal surfaces given as graphs of solutions to \eqref{mingraph-eq} will be called also \emph{minimal graphs}.
 	\begin{prob}\label{AD-prob}
	Let $\theta\colon \pinf M \to \R$ be a continuous function. Does there exist a continuous function
	$u\colon \bar M \to \R$ with    
	\[
	\begin{cases}
	Q[u] = 0 \quad \text{in } M; \\
	u|\pinf M = \theta,
	\end{cases}
	\]
	and if yes, is the function $u$ unique?
	\end{prob}
 In the case such function $u$ exists for every $\theta \in C(\pinf M)$, we say that the asymptotic Dirichlet problem in $\bar M$ is \emph{solvable}. We will see that the solvability of this problem depends heavily on the geometry of the manifold $M$, but the uniqueness of the solutions depends also on the operator $Q$. Namely, for the usual Laplace, $\mathcal{A}$-harmonic and minimal surface operators we have the uniqueness but in the case of more complicated operators, that do not satisfy maximum principles, the uniqueness of solutions will be lost (see e.g. \cite{CHH2}).


\section{Overview of the results}

The study of the asymptotic Dirichlet problem has its origin in the question of the existence of entire bounded nonconstant harmonic functions on Cartan-Hadamard manifolds. Namely, one way to prove the existence is to solve the asymptotic Dirichlet problem with arbitrary continuous boundary data on $\pinf M$. This question gained lot of interest after the seminal work by Greene and Wu \cite{GW}, where they conjectured that if the sectional curvatures of a Cartan-Hadamard manifold $M$ satisfy
	\[
	K_M \le \frac{C}{r^2}, \quad C>0,
	\]
outside a compact set, then there exists an entire bounded nonconstant harmonic function on $M$. 

From now on we will focus on the mean curvature equation; a brief discussion about the results concerning harmonic and $\mathcal{A}$-harmo\-nic functions can be found e.g. in \cite{He-thesis}. We will mention also some results that are not directly about the asymptotic Dirichlet problem but that have motivated the study or have had an important role in the proofs of other results.

\subsection{Minimal surfaces}

To begin with bounded domains $\Omega\subset\R^n$, we mention that the usual Dirichlet problem for the minimal graphs was solved in 1968 by Jenkins and Serrin \cite{JenkinsSerrin} under the assumption that $\p\Omega$ has nonnegative mean curvature. Serrin \cite{serrin} generalised this for the graphs with prescribed mean curvature, and more recently Guio and Sa Earp \cite{GuioSa} considered similar Dirichlet problem in the hyperbolic space.

First result about the existence of entire minimal graphs is due to Nelli and Rosenberg. In \cite{NR}, in addition to constructing catenoids, helicoids and Scherk-type surfaces in $\HH^2\times\R$, they proved the following theorem using the disk model of $\HH^2$.
	\begin{thm}\label{nelli-rosen-thm}
	Let $\Gamma$ be a continuous rectifiable Jordan curve in $\pinf \HH^2\times\R$, that is a vertical graph. 
	Then, there exists a minimal vertical graph on $\HH^2$ having $\Gamma$ as asymptotic boundary. 
	The graph is unique.
	\end{thm}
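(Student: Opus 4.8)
The plan is to work in the disk model of $\HH^2$, so that $\pinf\HH^2$ is the unit circle $\Ss^1$ and the hypothesis that $\Gamma$ is a vertical graph means precisely that $\Gamma=\{(\xi,\varphi(\xi))\colon \xi\in\Ss^1\}$ for a continuous function $\varphi\colon\Ss^1\to\R$; rectifiability of $\Gamma$ amounts to $\varphi$ being of bounded variation. The theorem then reduces to producing a function $u\colon\HH^2\to\R$ solving the minimal graph equation \eqref{mingraph-eq} (with $\dv$ and $\nabla$ taken in the hyperbolic metric) whose graph has $\Gamma$ as asymptotic boundary, i.e. $u(x)\to\varphi(\xi)$ whenever $x\to\xi\in\Ss^1$. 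I would attack this by an exhaustion (Perron-type) argument combined with barriers at infinity.

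First I would exhaust $\HH^2$ by the geodesic disks $D_n=B(o,n)$. Since geodesic spheres in $\HH^2$ are strictly convex, each $\bdry{D_n}$ has positive mean curvature, so the classical interior Dirichlet problem for the minimal surface equation over $D_n$ is solvable for arbitrary continuous boundary data (Jenkins--Serrin theory, transplanted to $\HH^2\times\R$). Extending $\varphi$ to a continuous function $\co\varphi$ on $\co{\HH^2}$ and solving with data $\co\varphi|_{\bdry{D_n}}$ yields minimal graphs $u_n$ on $D_n$. Because $\varphi$ is bounded, the constants $\pm\norm{\varphi}_\infty$ are solutions lying above and below the boundary data, so the maximum principle gives the uniform bound $\norm{u_n}_\infty\le\norm{\varphi}_\infty$. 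Interior gradient estimates for minimal graphs then bound $\abs{\nabla u_n}$ on compact subsets independently of $n$, and elliptic regularity upgrades this to uniform $C^\infty$ bounds on compacta; a diagonal subsequence converges in $C^\infty_{\loc}$ to a minimal graph $u$ on all of $\HH^2$ with $\norm{u}_\infty\le\norm{\varphi}_\infty$.

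The main obstacle is to show that $u$ attains the boundary values $\varphi$ continuously on $\pinf\HH^2$; this is where the geometry of $\HH^2\times\R$ (and the rectifiability of $\Gamma$) really enters. The idea is to build, at each $\xi_0\in\Ss^1$ and each $\ve>0$, an upper barrier: a minimal surface lying above every $u_n$ near $\xi_0$ whose asymptotic values do not exceed $\varphi(\xi_0)+\ve$ on a neighbourhood of $\xi_0$, together with the symmetric lower barrier. For these one uses the explicit minimal surfaces produced in the same work — the Scherk-type graphs over ideal polygons and the rotationally invariant (catenoid) examples — whose asymptotic boundaries can be prescribed so as to dominate $\Gamma$ from one side near $\xi_0$; here continuity of $\varphi$ allows one to localise to a small arc, and the rectifiability (bounded variation) of $\Gamma$ is used to control the barrier over that arc. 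Comparing $u_n$ with such barriers on $D_n$ via the maximum principle and passing to the limit forces $\varphi(\xi_0)-\ve\le\liminf_{x\to\xi_0}u(x)\le\limsup_{x\to\xi_0}u(x)\le\varphi(\xi_0)+\ve$, so $u$ extends continuously to $\co{\HH^2}$ with $u|_{\pinf\HH^2}=\varphi$.

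Finally, uniqueness follows from the maximum principle at infinity. If $u,v$ are two minimal graphs with the same asymptotic boundary values, then $w=u-v$ satisfies a linear, homogeneous, divergence-form elliptic equation $\cL w=0$, obtained by differentiating the minimal surface operator along the segment $t\mapsto v+t(u-v)$, and $\cL$ has no zeroth-order term. Since $w\to0$ at every point of $\pinf\HH^2$, the maximum principle shows $w\equiv0$, hence $u\equiv v$.
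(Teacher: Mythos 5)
First, a framing remark: the survey does not prove Theorem \ref{nelli-rosen-thm} at all --- it quotes it from Nelli and Rosenberg \cite{NR} --- so your attempt can only be measured against the general two-step scheme described in Section 3 of the survey and against what each step actually requires. Your outer skeleton follows that scheme correctly: exhaustion by geodesic disks (whose boundary circles are mean convex, so the finite Dirichlet problems are solvable), the uniform bound $\norm{u_n}_\infty\le\norm{\varphi}_\infty$ by comparison with the constant solutions, interior gradient estimates plus a diagonal argument to produce an entire solution, and a maximum-principle uniqueness argument that is essentially the one recorded in Section 3. Your linearized version of the uniqueness step is fine, provided you first note that a bounded minimal graph having the vertical graph $\Gamma$ as asymptotic boundary extends continuously to $\bar{\HH}^2$: if $x\to\xi\in\Ss^1$, every limit point of $(x,u(x))$ lies in $\Gamma\cap\bigl(\{\xi\}\times\R\bigr)=\{(\xi,\varphi(\xi))\}$, so $w=u-v$ really does tend to $0$ at every point of $\pinf\HH^2$ and an interior maximum of $w$ would violate the strong maximum principle for the zeroth-order-free linearized operator.

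The genuine gap is the barrier step, which is precisely the content of the theorem and which you assert rather than prove. What you need at $\xi_0\in\Ss^1$ is, for each $\ve>0$, a supersolution $\psi$ on a set $\Omega\subset\HH^2$ with $\xi_0\in\Int\pinf\Omega$ such that (i) $\psi$ dominates $\sup_n\norm{u_n}_\infty$ on the finite boundary $\p\Omega$, so the comparison with $u_n$ on $D_n\cap\Omega$ can be closed along $D_n\cap\p\Omega$, and (ii) $\limsup_{x\to\xi}\psi(x)\le\varphi(\xi_0)+\ve$ for all $\xi$ in an arc around $\xi_0$. Neither family you invoke does this off the shelf: the catenoids of \cite{NR} are rotationally symmetric with asymptotic boundary two full horizontal circles, so they give only global height control (data in a slab stays in a slab), not the localized oscillation control (i)--(ii); and the Scherk-type graphs of \cite{NR} live over ideal polygons with data $\pm\infty$ on all geodesic sides, so their ideal-arc values are not freely prescribable in the way your comparison needs. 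To execute your plan one must prove an auxiliary Jenkins--Serrin-type existence result --- e.g.\ a minimal graph over a halfplane bounded by a geodesic $\gamma$, taking value $+\infty$ (or a large constant) on $\gamma$ and prescribed continuous data on the ideal arc, in the spirit of what Sa Earp and Toubiana \cite{ET} later established --- and this is nontrivial, not a citation. Relatedly, your appeal to rectifiability (``used to control the barrier over that arc'') does no identifiable work: localising to a small arc uses only continuity of $\varphi$, and a hypothesis invoked exactly at the vaguest point of an argument is a sign that the step has not actually been carried out. As written, the proposal reproduces the standard framework but leaves unproved the one step where the geometry of $\HH^2\times\R$ and the hypotheses on $\Gamma$ genuinely enter.
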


In 2005 Meeks and Rosenberg \cite{MR} developed the theory of properly embedded minimal surfaces in $N\times\R$, where $N$ is a closed orientable Riemannian surface but the existence of entire minimal surfaces in product spaces $M\times\R$ really draw attention after the papers by Collin and Rosenberg \cite{CR} and Gálvez and Rosenberg \cite{GR}. In \cite{CR} Collin and Rosenberg constructed a harmonic diffeomorphism from $\C$ onto $\HH^2$ and hence disproved the conjecture of Schoen and Yau \cite{ScYau}. Gálvez and Rosenberg generalised this result to Hadamard surfaces whose curvature is bounded from above by a negative constant. A key tool in their constructions was to solve the Dirichlet problem on unbounded ideal polygons with alternating boundary values $\pm\infty$ on the sides of the ideal polygons (so-called Jenkins-Serrin problem). In the end of \cite{GR} Gálvez and Rosenberg obtain also a counterpart of the Theorem \ref{nelli-rosen-thm} under the assumption $K_M \le C < 0$.

Sa Earp and Toubiana \cite{ET} constructed minimal vertical graphs over unbounded domains in $\HH^2\times\R$ taking certain prescribed finite boundary data and certain prescribed asymptotic boundary data. Espírito-Santo and Ripoll \cite{ER} considered the existence of solutions to the exterior Dirichlet problem on simply connected manifolds with negative sectional curvature. Here the idea is to find minimal hypersurfaces on unbounded domains with compact boundary assuming zero boundary values.

Espírito-Santo, Fornari and Ripoll \cite{EFR} proved the solvability of the asymptotic Dirichlet problem on Riemannian manifold $M$ whose sectional curvatures satisfy $K_M\le -k^2$, $k>0$, and under the assumption that there exists a point $p\in M$ such that the isotropy group at $p$ of the isometry group of $M$ acts transitively on the geodesic spheres centred at $p$.

Rosenberg, Schulze and Spruck \cite{RSS} studied minimal hypersurfaces in $N\times\R_+$ with $N$ complete Riemannian manifold having non-negative Ricci curvature and sectional curvatures bounded from below. They proved so-called half-space properties both for properly immersed minimal surfaces and for graphical minimal surfaces. In the latter, a key tool was a global gradient estimate for solutions of the minimal surface equation.

Ripoll and Telichevesky \cite{RT} showed the existence of entire bounded nonconstant solutions for slightly larger class of operators, including minimal surface operator, by studying the strict convexity (SC) condition of the manifold. Similar class of operators was studied also by Casteras, Holopainen and Ripoll \cite{CHR} but instead of considering the SC condition, they solved the asymptotic  Dirichlet problem by using similar barrier functions as in \cite{HoVa}. Both of these gave the existence of minimal graphic functions under the sectional curvature assumption 
	\begin{equation}\label{HoVaCurv2}
	-r(x)^{-2-\ve} e^{2kr(x)} \le K_M(P_x) \le -k^2
	\end{equation}
outside a compact set, and the latter also included the assumption
	\begin{equation}\label{HoVaCurv1}
	-r(x)^{2(\phi-2)-\ve} \le K(P_x) \le -\frac{\phi(\phi-1)}{r(x)^2},
	\end{equation}
$r(x)\ge R_0$, for some constants $\phi>1$ and $\ve,R_0>0$.

In \cite{CHR1} Casteras, Holopainen and Ripoll adapted a method that was earlier used by Cheng \cite{cheng} for the study of harmonic functions and by Vähäkangas \cite{Va2} for $\mathcal{A}$-harmonic functions, and proved  the following.
	\begin{thm}\label{CHR-op-thm}
	Let $M$ be a Cartan-Hadamard manifold of dimension $n\ge 3$ and suppose that
	\begin{equation}\label{CHRoptub}
	-\frac{\big(\log r(x)\big)^{2\tilde\ve}}{r(x)^2} \le K_M(P_x) \le -\frac{1+\ve}{r(x)^2\log r(x)}
	\end{equation}
	holds for all $2$-planes $P_x\subset T_xM$ and for some constants $\ve>\tilde\ve>0$ and $r$ large enough. Then the asymptotic Dirichlet
	problem for the minimal surface equation \eqref{mingraph-eq} is uniquely solvable.
	\end{thm}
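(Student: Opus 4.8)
The plan is to combine an exhaustion argument, which produces a global solution of \eqref{mingraph-eq} on $M$, with a barrier construction at the asymptotic boundary that forces the prescribed boundary values. Fix a continuous $\theta\colon\pinf M\to\R$ and extend it to a continuous function $\tilde\theta$ on $\bar M$. On each geodesic ball $B(o,k)$ the boundary is convex, since $M$ is Cartan--Hadamard, so the bounded Dirichlet problem $Q[u_k]=0$ in $B(o,k)$ with $u_k=\tilde\theta$ on $\partial B(o,k)$ is solvable by the Jenkins--Serrin-type theory for mean-convex domains. A global gradient estimate of the type used in \cite{RSS}, which is where the lower curvature bound in \eqref{CHRoptub} enters, gives interior $C^1$ and hence, by elliptic regularity, $C^{1,\al}_{\loc}$ bounds that are uniform on compact subsets of $M$. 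A diagonal compactness argument then extracts a subsequence $u_k\to u$ converging in $C^1_{\loc}(M)$ to a solution $u$ of \eqref{mingraph-eq} on all of $M$.

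It remains to show that $u$ extends continuously to $\bar M$ with $u|\pinf M=\theta$, and this is where barriers are needed. Fix $x_0\in\pinf M$, represented by a unit vector $v\in T_oM$, and $\delta>0$. I would construct a continuous supersolution $w^+$ on $\bar M$ with $Q[w^+]\le 0$ in $M$, with $w^+\ge\tilde\theta$ on $\partial B(o,k)$ for all large $k$, and with $w^+(x_0)\le\theta(x_0)+\delta$, together with a symmetric subsolution $w^-$. The comparison principle on each $B(o,k)$ then yields $w^-\le u_k\le w^+$, and passing to the limit and then letting $x\to x_0$ pins the boundary value of $u$ at $x_0$ to within $2\delta$ of $\theta(x_0)$; since $\delta$ and $x_0$ are arbitrary, this gives continuity of $u$ up to $\pinf M$.

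The barrier is the crux. I would look for $w^+$ of the split form $w^+=\theta(x_0)+\delta+C_1 A(x)+C_2\,\eta\big(r(x)\big)$, where the radial profile is $\eta(r)=\int_r^\infty dt/f_k(t)$ for the Jacobi function $f_k$ associated with the upper bound $k^2(r)=\tfrac{1+\ve}{r^2\log r}$, and $A$ is an angular barrier built from the angle $\an\big(v,\dot\gamma_0^{o,x}\big)$ together with the transverse behaviour of $r$. The upper curvature bound forces $f_k(r)\sim r(\log r)^{1+\ve}$, so $\int^\infty dt/f_k(t)<\infty$ precisely because $\ve>0$; this makes $\eta$ well defined, decaying to $0$ as $r\to\infty$, and it supplies the radial decay that suppresses the oscillation of $u$ near $x_0$. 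To verify $Q[w^+]\le0$ one computes the minimal surface operator on such a function and estimates the mean-curvature term through the Hessian comparison theorem. Writing the Laplacian of the radial part as $\eta''+\eta'\,\tr\Hess r$ and inserting the lower Laplace bound $\tr\Hess r\ge(n-1)f_k'/f_k$, which comes from the upper curvature bound, the radial part contributes a term proportional to $(2-n)f_k'/f_k^2$; this is strictly negative exactly when $n\ge3$, which is where the dimensional hypothesis enters, and it provides the negative slack that must absorb the angular and gradient error terms.

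The main obstacle is exactly this verification in the sharp regime. The two bounds in \eqref{CHRoptub} are essentially optimal, so the competition between the radial gain coming from the $r^{-2}(\log r)^{-1}$ upper bound and the angular loss coming from the lower bound $-(\log r)^{2\tilde\ve}/r^2$, which controls $\Hess r$ from above and hence the error produced by the angular part $A$, must be tracked with the correct constants. It is precisely the gap $\ve>\tilde\ve>0$ that lets the angular error terms be absorbed into the radial gain and keeps $Q[w^+]\le0$ for a suitable choice of $C_1,C_2$. Finally, uniqueness follows from the comparison principle for the divergence-form operator in \eqref{mingraph-eq}: since $Q$ satisfies the maximum principle, any two solutions agreeing on $\pinf M$ must coincide on $M$.
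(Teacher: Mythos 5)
Your two-step skeleton (exhaustion, then boundary control, with uniqueness via the comparison argument) matches the survey, but your boundary-control mechanism is not the one behind Theorem \ref{CHR-op-thm}. The paper's proof, following \cite{CHR1} and the Cheng--V\"ah\"akangas method \cite{cheng,Va2}, is barrier-free: it runs a Moser iteration from the Sobolev inequality \eqref{sobolev_eq} and the Caccioppoli-type inequality \eqref{caceq1} to get the pointwise estimate \eqref{eq-pt-est}, and combines \eqref{caceq1} with complementary Young functions tailored to \eqref{CHRoptub} to get the integral estimate \eqref{eq-int-est}; together these force $u\to\theta$ along any sequence tending to $\pinf M$. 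A barrier proof in exactly this curvature regime does exist --- it is the content of \cite{CHR2}, where the SC condition is verified under \eqref{CHRoptub} with a modified version of the \cite{HoVa} barrier, and solvability then follows via \cite{RT} --- so your route is not wrong in principle, but it is the later-achieved and technically harder one, and your sketch does not carry it out.

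The genuine gap is the angular barrier $A$, which you never construct, and in this borderline regime its construction \emph{is} the proof. Your quantitative bookkeeping, taken at face value, goes the wrong way. The radial slack you isolate is of size $(n-2)f_k'/f_k^2 \approx r^{-2}(\log r)^{-1-\ve}$, since $f_k(r)\approx r(\log r)^{1+\ve}$. On the other side, the lower bound in \eqref{CHRoptub} gives only $\Hess r \le (f_b'/f_b)\,(g - dr\otimes dr)$ with $b(r)=(\log r)^{\tilde\ve}/r$, so $f_b'/f_b \approx (\log r)^{\tilde\ve}/r$ while $f_b$ itself grows superpolynomially, like $\exp\bigl((\log r)^{1+\tilde\ve}/(1+\tilde\ve)\bigr)$. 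A generic error term of the form $|\nabla A|\cdot f_b'/f_b$, with the best available spreading estimate $|\nabla A|\lesssim 1/f_k \approx r^{-1}(\log r)^{-1-\ve}$, is then of size $r^{-2}(\log r)^{\tilde\ve-1-\ve}$, which \emph{exceeds} the radial slack by the unbounded factor $(\log r)^{\tilde\ve}$; so the absorption you attribute to the gap $\ve>\tilde\ve$ fails for the naive additive ansatz $w^+=\theta(x_0)+\delta+C_1A+C_2\eta$, no matter how $C_1,C_2$ are chosen. Worse, the tangential part of $\Delta A$ cannot even be bounded from the raw angle function: the distortion of the geodesic spheres relative to the round metric is of order $f_b/f_k$, which blows up superpolynomially, so two derivatives of an unsmoothed angular function are uncontrolled. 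What repairs this in \cite{HoVa} and \cite{CHR2} is a smoothing of the extended boundary function at a scale chosen in terms of \emph{both} curvature functions, producing refined gradient and Hessian estimates for $A$ that beat $r^{-2}(\log r)^{-1-\ve}$; this is exactly the ``very long and technical'' part the survey alludes to, and until you specify that smoothing and prove those estimates, $Q[w^+]\le 0$ is unverified.

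Two minor points. The global gradient estimate of \cite{RSS} assumes nonnegative Ricci curvature and does not apply here; but this is harmless, since the exhaustion only needs interior gradient estimates on compact sets, which hold with no global curvature hypothesis --- the lower bound in \eqref{CHRoptub} is needed only where you later use it, to control $\Hess r$. Also, in the paper's method the restriction $n\ge 3$ enters through the iteration and Young-function scheme rather than through the sign of $2-n$, though your observation that the radial slack degenerates when $n=2$ is consistent with it.
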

The proof was based on the usage of Sobolev and Caccioppoli-type inequalities together with complementary Young functions.

Telichevesky \cite{telichevesky} considered the Dirichlet problem on unbounded domains $\Omega$ proving the existence of solutions provided that $K_M\le-1$, the ordinary boundary of $\Omega$ is mean convex and that $\Omega$ satisfies the SC condition at infinity. The SC condition was studied by Casteras, Holopainen and Ripoll also in \cite{CHR2} and they proved that the manifold $M$ satisfies the SC condition under very general curvature assumptions. As special cases they obtain the bounds \eqref{CHRoptub} and
	\begin{equation}\label{CHR-lowb}
	-ce^{(2-\ve)r(x)}e^{e^{r(x)/e^3}} \le K_M \le -\phi e^{2r(x)}
	\end{equation}
for some constants $\phi>1/4$, $\ve>0$ and $c>0$. In addition to the asymptotic Dirichlet problem, Casteras, Holopainen and Ripoll applied the SC condition to prove also the solvability of the asymptotic Plateau problem.

Adapting Vähäkangas' method Casteras, Heinonen and Holopai\-nen showed that, as in the case of $\mathcal{A}$-harmonic functions, the curvature lower bound can be replaced by a pointwise pinching condition obtaining the following result.
\begin{thm}\label{pinch_maintheorem}
  Let $M$ be a Cartan-Hadamard manifold of dimension $n\ge 2$ and let $\phi>1$. Assume that 
      \begin{equation}\label{p_curvassump}
       K(P_x) \le - \frac{\phi(\phi-1)}{r(x)^2},
      \end{equation}
holds for all $2$-planes $P_x\subset T_xM$
containing the radial vector $\nabla r(x)$, with $x\in M\setminus B(o,R_0)$. Suppose also
that there exists a constant $C_K<\infty$ such that 
      \begin{equation}\label{pinchassump}
       \abs{K(P_x)} \le C_K\abs{K(P_x')}
      \end{equation}
whenever $x\in M\setminus B(o,R_0)$ and $P_x,P_x'\subset T_xM$ are $2$-planes 
containing the radial vector $\nabla r(x)$. Moreover, suppose that the dimension $n$ and
the constant $\phi$ satisfy the relation
      \begin{equation*}\label{pinch_dimrestriction}
       n> \frac{4}{\phi} +1.
      \end{equation*}
Then the asymptotic Dirichlet problem for the minimal surface equation \eqref{mingraph-eq} is
uniquely solvable.
 \end{thm}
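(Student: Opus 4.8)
The plan is to follow the standard exhaustion scheme for the asymptotic Dirichlet problem and to concentrate all of the difficulty into a single boundary estimate at infinity, where the pinching hypothesis \eqref{pinchassump} will take over the role played by a curvature lower bound in the two-sided results such as Theorem \ref{CHR-op-thm}. Fix a continuous extension $\tilde\theta\in C(\bar M)$ of $\theta$ and an exhaustion of $M$ by geodesic balls $B(o,R_i)$, $R_i\upto\infty$. The first task is to solve, on each ball, the Dirichlet problem for \eqref{mingraph-eq} with boundary values $\tilde\theta|\bdry{B(o,R_i)}$; the second is to pass to a limit; the third and decisive one is to show that the limit attains the prescribed values on $\pinf M$.

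For the interior part I would first invoke classical solvability on bounded domains: since $M$ is Cartan--Hadamard, every geodesic sphere $\bdry{B(o,R_i)}$ is convex and hence mean convex, so the solvability on bounded mean-convex domains recalled in the introduction yields a unique $u_i\in C^\infty\big(B(o,R_i)\big)\cap C\big(\co{B(o,R_i)}\big)$ solving \eqref{mingraph-eq}. The maximum principle gives the uniform height bound $\sup_{B(o,R_i)}\abs{u_i}\le\sup_{\pinf M}\abs{\theta}$. Interior gradient estimates for the minimal surface operator, depending only on this height bound and on the geometry over compact sets, then provide local $C^{1,\alpha}$ bounds independent of $i$, which Schauder theory upgrades to local $C^{2,\alpha}$ bounds. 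A diagonal argument extracts a subsequence converging in $C^2_\loc(M)$ to an entire solution $u$ of \eqref{mingraph-eq} with $\abs{u}\le\sup\abs{\theta}$. It remains to prove that $u$ extends continuously to $\bar M$ with $u|\pinf M=\theta$.

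The core of the proof is the boundary estimate, and here I would adapt Vähäkangas' method. Fixing $x_0\in\pinf M$ and $\ve>0$, the aim is to produce a comparison function $w$ on a truncated cone $T(v_0,\al,R)$ about the direction $v_0$ of $x_0$ that is continuous up to $\pinf M$, satisfies $w\ge\sup\abs{\theta}$ on the lateral and inner boundary of the cone, and has $\limsup_{x\to x_0}w(x)\le\theta(x_0)+\ve$, and which controls $u$ from above; the symmetric construction controls $u$ from below. I would build $w$ from two ingredients: an angular factor penalising directions away from $v_0$, measured by the angle $\an(\dot\gamma_0^{o,x},v_0)$, and a radial factor $h\circ r$ decaying at infinity. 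The admissible decay rate of $h$ is dictated by the upper bound \eqref{p_curvassump}: the relevant comparison model (outside $B(o,R_0)$) is the Jacobi function $f_k$ with $k(r)^2=\phi(\phi-1)/r^2$, for which $f_k(r)$ grows like $r^{\phi}$, so $h$ is chosen in terms of this $f_k$ and the associated Hessian comparison for $r$. Squeezing $u$ between $\theta(x_0)\pm\ve$ near $x_0$ and letting $\ve$ and the cone be arbitrary then yields the continuous attainment of the boundary data, while uniqueness of the full problem follows from the comparison principle for \eqref{mingraph-eq}.

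The main obstacle is verifying that the candidate $w$ really does control $u$, which reduces to the supersolution inequality $Q[w]\le0$. Writing $W=\sqrt{1+\abs{\n w}^2}$, one must control $W\,Q[w]=\Delta w-W^{-2}\Hess w(\n w,\n w)$, and two effects compete. The angular part of $w$ contributes second-order terms whose size is governed by the principal curvatures of geodesic spheres, equivalently by $\Hess r$ on radial $2$-planes. The curvature upper bound \eqref{p_curvassump} only bounds these from below; an upper control is needed as well, and this is exactly where the pinching condition \eqref{pinchassump} enters, forcing the principal curvatures to be mutually comparable (controlled anisotropy) and thereby replacing the missing curvature lower bound. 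Following Vähäkangas, this control is then established not purely pointwise but through Caccioppoli- and Sobolev-type integral estimates, in which the transverse volume growth $f_k(r)^{n-1}\sim r^{(n-1)\phi}$ of geodesic spheres must be beaten by the decay of the test functions. The balance between this growth and the admissible decay is what forces the dimension restriction $n>4/\phi+1$, i.e. $(n-1)\phi>4$. Making all of these estimates uniform in $x_0$ and $\ve$, so that the squeezing argument actually closes, is the most delicate point of the argument.
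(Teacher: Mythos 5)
Your first stage (exhaustion by geodesic balls with convex boundary spheres, height bound from the maximum principle, interior gradient estimates plus a diagonal argument, and uniqueness via the comparison argument) is exactly the scheme described in the paper and is fine. The genuine gap is in the boundary step. You propose to squeeze $u$ between comparison functions $w$ built on truncated cones from an angular factor times a radial decay, and to verify the supersolution inequality $Q[w]\le 0$, claiming that the pinching condition \eqref{pinchassump} supplies the missing upper control on $\Hess r$. It does not: \eqref{pinchassump} only forces the sectional curvatures of the radial $2$-planes at a given point to be \emph{mutually} comparable; their common size is completely unconstrained (for instance $K(P_x)\equiv -e^{e^{r(x)}}$ on all radial planes satisfies \eqref{pinchassump} with $C_K=1$ and also \eqref{p_curvassump}). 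Consequently the principal curvatures of geodesic spheres, hence the second-order terms produced by the angular factor of $w$, admit no pointwise bound, and $Q[w]\le 0$ cannot be verified on any cone, however small. Your fallback --- establishing this control ``through Caccioppoli- and Sobolev-type integral estimates'' --- does not repair the argument: to invoke the comparison principle you need $w$ to be a genuine (weak) supersolution, which is a pointwise/distributional differential inequality and cannot be certified by integral estimates. This is precisely why the barrier-based papers \cite{HoVa}, \cite{CHR} assume a curvature \emph{lower} bound such as \eqref{HoVaCurv2}--\eqref{HoVaCurv1}, and why barriers survive without a lower bound only in the rotationally symmetric case (Theorem \ref{rotsymmingraph}), where the explicit polar form \eqref{laplacepolar} of the operator is available.

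The actual proof (from \cite{CHH1}, sketched in the survey) abandons barriers altogether and runs Cheng's and V\"ah\"akangas' integral method: one works with the weak formulation \eqref{MinEqWeak}, proves the Caccioppoli-type inequality \eqref{caceq1} by testing with $f=\eta^2\varphi\big((u-\theta)^+/\nu\big)-\eta^2\varphi\big((u-\theta)^-/\nu\big)$, combines it with the Hoffman--Spruck Sobolev inequality \eqref{sobolev_eq} in a Moser iteration to obtain the pointwise-by-integral bound \eqref{eq-pt-est}, and then proves the global Poincar\'e-type estimate \eqref{eq-int-est} using complementary Young functions built so that $G\circ\varphi'=\varphi$ and $G_1\circ\varphi''\approx\varphi$, as in \eqref{psi-phi-constr}. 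The pinching hypothesis enters only in this last integral estimate, where comparability of the principal curvatures along each radial geodesic (obtained by comparing the Riccati equations they satisfy) gives direction-uniform control of the volume element; the restriction $n>4/\phi+1$, i.e.\ $(n-1)\phi>4$, comes from balancing the constant $(4+\ve)$ in \eqref{caceq1} and the growth of $\varphi$ against the volume growth $\sim r^{(n-1)\phi}$. So your heuristic for the dimension restriction is in the right spirit, but it is realized inside this integral scheme, not inside a barrier verification, and without replacing your squeezing step by the estimates \eqref{eq-pt-est}--\eqref{eq-int-est} the proof does not close.
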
 
It is worth to point out that choosing $\phi>4$ in Theorem \ref{pinch_maintheorem}, the result holds for every dimension $n\ge2$ and if $n\ge5$, $\phi$ can be as close to $1$ as we wish. Moreover, if $M$ is $2$-dimensional, the condition \eqref{pinchassump} is trivially satisfied and the asymptotic Dirichlet problem can be solved by assuming only the curvature upper bound \eqref{p_curvassump} and $\phi>4$.

In the case of rotationally symmetric manifolds it is possible to obtain solvability results without any assumptions on the lower bound of the sectional curvatures. Ripoll and Telichevesky \cite{RTgeomdedi} solved the asymptotic Dirichlet problem on $2$-dimensional surfaces, and later Casteras, Heinonen and Holopainen \cite{CHH3} gave a proof that holds in any dimension $n\ge2$, obtaining the following.
	\begin{thm}\label{thm1.5}
	Let $M$ be a rotationally symmetric $n$-dimensional Cartan-Hadamard manifold whose radial sectional curvatures outside a compact set 
	satisfy the upper bounds
	 \begin{equation*}
	K(P_x) \le - \frac{1 + \ve }{r(x)^2 \log r(x)}, \quad \text{if } n=2 
	 \end{equation*}
      	and
   	 \begin{equation*}
  	K(P_x) \le - \frac{1/2 + \ve }{r(x)^2 \log r(x)}, \quad \text{if } n\ge3. 	 
   	 \end{equation*}
     		Then the asymptotic Dirichlet problem for the minimal surface equation \eqref{mingraph-eq} is
	uniquely solvable.  
  \end{thm}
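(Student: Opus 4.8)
The plan is to combine the classical exhaustion (Perron-type) method with a barrier construction that exploits the rotational symmetry of $M$. First I would fix a continuous extension of $\theta$ to $\bar M$ and solve the Dirichlet problem for \eqref{mingraph-eq} on the geodesic balls $B(o,k)$ with these boundary values. Since $K_M\le 0$ forces geodesic spheres to be convex, hence mean convex, the classical solvability theory for the minimal surface equation on mean-convex domains yields solutions $u_k\in C^2\big(B(o,k)\big)\cap C\big(\co{B(o,k)}\big)$. The maximum principle gives the uniform height bound $\sup_{B(o,k)}\abs{u_k}\le\sup_{\pinf M}\abs{\theta}$, and interior gradient estimates for minimal graphs provide local $C^{1,\al}$ bounds independent of $k$; passing to a subsequence, $u_k\to u$ in $C^1_{\loc}(M)$, where $u$ solves \eqref{mingraph-eq} on all of $M$ and obeys the same height bound. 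Uniqueness is not an issue: it follows from the comparison principle enjoyed by the minimal surface operator.

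Everything then reduces to showing that $u$ extends continuously to $\pinf M$ with $u|\pinf M=\theta$, and by comparison this amounts to producing, for each $x_0\in\pinf M$ and each $\ve>0$, an upper and a lower barrier at $x_0$. Here the rotational symmetry is decisive. Writing points as $(r,\omega)$ with $\omega\in\Ss^{n-1}$, identifying $x_0$ with a direction $\omega_0$, and letting $\vartheta(x)$ denote the angular distance from $\omega$ to $\omega_0$, I would seek barriers of the reduced form $w=w(r,\vartheta)$. For such functions the operator in \eqref{mingraph-eq} can be written out explicitly from the metric $g=dr^2+f(r)^2\,d\vartheta^2$, which turns the barrier problem into the analysis of an equation in the two variables $(r,\vartheta)$.

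The computation that drives the construction is the operator on a purely radial profile $\psi(r)$,
	\[
	\dv\frac{\nabla\psi}{\sqrt{1+\abs{\nabla\psi}^2}}
	=\frac{1}{f^{n-1}}\biggl(f^{n-1}\,\frac{\psi'}{\sqrt{1+(\psi')^2}}\biggr)',
	\]
so that bounded radial minimal graphs are governed by the convergence of $\int^\infty f(r)^{-(n-1)}\,dr$, while the curvature hypothesis controls the growth of $f$ through the relation $K=-f''/f$ implied by \eqref{rotsym_sect_for}. The barriers themselves I would build by adding to a suitable multiple of such a radial profile an angular correction that vanishes along the axis $\omega_0$ and forces $w$ above (resp. below) the boundary data away from $x_0$, and then verify directly, using the explicit expression of the operator in $(r,\vartheta)$, that $w$ is a supersolution (resp. subsolution) of \eqref{mingraph-eq} near $x_0$.

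The hard part is that the curvature bounds in the statement are borderline: the decay $K\sim -r^{-2}(\log r)^{-1}$ is essentially the slowest for which bounded nonconstant solutions can survive, so the radial and angular ingredients of $w$ must be balanced very tightly, with the favourable contribution of the radial drift dominating the error produced by the angular correction. The sharp estimates on $f'/f$ extracted from \eqref{rotsym_sect_for} under the stated bounds are exactly what is needed to close this inequality, and the weaker threshold $1/2+\ve$ permitted when $n\ge3$, compared with $1+\ve$ when $n=2$, reflects the stronger radial drift $(n-1)f'/f$ available in higher dimensions. Verifying that these competing terms can be controlled simultaneously is where the real work of the proof lies.
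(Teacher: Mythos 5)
Your overall scaffolding (exhaustion over $B(o,k)$ on mean-convex spheres, height bound, interior gradient estimates, comparison, reduction to barriers adapted to the symmetry) matches the paper's strategy, but the core of your barrier construction has a genuine gap: the integral condition you identify as the driver is the wrong one. A radial profile governed by the convergence of $\int^\infty f(t)^{1-n}\,dt$, e.g.\ $\psi(r)=k\int_r^\infty f(t)^{1-n}\,dt$, satisfies $f^{n-1}\psi'\equiv -k$, hence by \eqref{laplacepolar} it is \emph{harmonic in the leading order}: $\Delta\psi=0$, so it produces no negative excess. Meanwhile any angular ingredient $B(r,\vartheta)=b(\vartheta)$ contributes a term of size $+O\bigl(f(r)^{-2}\bigr)$ through $f^{-2}\Delta^{\Ss}b$, and your supersolution inequality cannot close: there is nothing to absorb the angular error. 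This is not a technicality about tight balancing; it is decisive. The profile must instead satisfy $\Delta\eta\le -C f^{-2}$ while remaining bounded with $\eta(r)\to0$, which forces exactly
\begin{equation*}
\eta(r)=k\int_r^\infty f(t)^{1-n}\int_1^t f(s)^{n-3}\,ds\,dt,
\end{equation*}
for which $\Delta\eta=-k\,f^{-2}$, and whose finiteness is the condition \eqref{intcond} of Theorem \ref{rotsymmingraph} --- strictly stronger than $\int^\infty f^{1-n}<\infty$. The distinction is visible already on $\R^n$ with $n\ge3$: there $\int^\infty r^{1-n}\,dr<\infty$, yet by Bernstein--Moser there are no bounded nonconstant minimal graphs, and indeed \eqref{intcond2} diverges since $\int^\infty t^{-(n-1)}\int_1^t s^{n-3}\,ds\,dt\sim\int^\infty dt/t=\infty$. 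So an argument built on $\int f^{1-n}$ alone would ``prove'' solvability in a case where it is false.

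Two further points where your plan diverges from what is actually needed. First, the paper's barriers are global, not local at each $x_0$: one extends the (smooth, nonconstant) data $b$ radially to $B(\exp(r\vartheta))=b(\vartheta)$, shows $\eta+B$ is a supersolution outside a compact set for $k\ge\|b\|_{C^2}$ large (this is where the lower-order gradient terms of \eqref{mineqetaB} must be estimated --- the ``somewhat long computation''), caps it with a constant to get a global barrier $w$, and handles merely continuous data by approximation; this avoids any Anderson--Schoen-type local smoothing, which is not available here since the theorem imposes \emph{no curvature lower bound}. Second, the passage from the stated borderline bounds $K\le -(1+\ve)/(r^2\log r)$ (resp.\ $-(1/2+\ve)/(r^2\log r)$) to \eqref{intcond} is a separate nontrivial step, carried out via March's analysis of $f$, not by extracting estimates on $f'/f$ directly as you suggest; the dimension-dependent thresholds arise from the factor $f^{n-3}$ in the inner integral (decreasing for $n=2$, increasing for $n\ge3$), which is consistent with, but more precise than, your heuristic about the radial drift $(n-1)f'/f$.
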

We point out that the curvature assumptions in Theorem \ref{thm1.5} are most likely optimal (even the constants in the numerators). Namely, March \cite{march} gave an if and only if result for the existence of bounded entire nonconstant harmonic functions under the same curvature assumptions (see also the discussion in Section \ref{intro_nonex}).

\subsection{Other prescribed mean curvature surfaces}\label{sec-other-op}

As we have already mentioned, the asymptotic Dirichlet problem has been previously considered also for other type of operators (starting from the Laplacian) but the class of surfaces, that is very closely related to minimal surfaces, are the surfaces of prescribed mean curvature. It is well known that the graph of a function $u\colon M\to \R$ has prescribed mean curvature $H$ if $u$ satisfies
	\[
	\dv \frac{\nabla u}{\sqrt{1+|\nabla u|^2}} = nH,
	\]
where $H\colon M\to\R$ is a given function. As it is reasonable to believe, in this case the solvability of the asymptotic Dirichlet problem will depend also on the function $H$.

A very special type of prescribed mean curvature surfaces are the so-called $f$-minimal surfaces that are obtained by replacing the function $nH$ by
	\[
	\ang{\bar\nabla f,\nu_u},
	\]
where $f\colon M\times\R \to \R$ and $\nu_u$ is the downward pointing unit normal to the graph of $u$. In this case the mean curvature will depend, not only on the point of the manifold, but also on the solution itself. The name $f$-minimal comes from the fact that these surfaces are minimal in the weighted manifolds $(M,g,e^{-f} d\vol_M)$, where $(M,g)$ is a complete Riemannian manifold with volume element $d\vol_M$.

In \cite{CHH2} Casteras, Heinonen and Holopainen studied these $f$-mini\-mal graphs and solved first the usual Dirichlet problem under suitable assumptions and then applied it to solve the asymptotic Dirichlet problem under curvature assumptions similar to those that appeared in \cite{CHR}, i.e. \eqref{HoVaCurv2} and \eqref{HoVaCurv1}. We point out that in \cite{CHH2} it was necessary to assume that the function $f\in C^2(M\times\R)$ is of the form
	\[
	f(x,t) = m(x) + r(t).
	\]
Other key assumptions are related to the decay of $|\bar\nabla f|$ compared to the curvature upper bound of the manifold. For example, it is required that
	\[
	\sup_{\p B(o,r)\times\R} |\bar\nabla f| = o \left( \frac{f'_a(r)}{f_a(r)} r^{-\ve-1} \right),
	\]
where $f_a$ is the Jacobi solution related to the curvature upper bound. Note that $(n-1)f_a'(r)/f_a(r)$ is the mean curvature of a sphere of radius $r$ on a model manifold with curvature $-a^2$. The assumptions related to the decay of $|\bar\nabla f|$ are really necessary in view of a result due to Pigola, Rigoli and Setti \cite{PigolaRigoliSetti}.

In \cite{CHHL} Casteras, Heinonen, Holopainen and Lira studied the asymptotic Dirichlet problem for Killing graphs with prescribed mean curvature on warped product manifolds $M\times_\varrho\R$. Here $M$ is a complete $n$-dimensional Riemannian manifold and $\varrho\in C^\infty(M)$ is a smooth warping function. This means that the metric of $M\times_\varrho\R$ can be written in the form
	\[
	(\varrho\circ\pi_1)^2 \pi^*_2 dt^2 + \pi^*_1g,
	\]
where $g$ is the metric on $M$ and $\pi_1\colon M\times\R \to M$ and $\pi_2 \colon M\times\R \to \R$ are the standard projections. In this setting $X=\p_t$ is a Killing field with $|X|=\varrho$ on $M$. The norm of $X$ is preserved along its flow lines and therefore $\varrho$ can be extended to a smooth function $\varrho=|X| \in C^\infty(M\times_\varrho\R)$. Then the Killing graph of a function $u\colon M\to \R$ is the hypersurface given by
	\[
	\Sigma_u = \{\Psi(x,u(x)) \colon x\in M\},
	\]
where $\Psi\colon M\times\R\to M\times_\varrho\R$ is the flow generated by $X$. Such Killing graphs were first introduced in \cite{DajRip} and \cite{DHL} (see also \cite{DLR}). In \cite{DHL} it was shown that the Killing graph $\Sigma_u$ has mean curvature $H$ if $u$ satisfies the equation
	\begin{equation}\label{kil-eq}
	\dv_{-\log\varrho} \frac{\nabla u}{\sqrt{\varrho^{-2} + |\nabla u|^2}} = nH,
	\end{equation}
where $\dv_{-\log\varrho} (\cdot) = \dv (\cdot) + \ang{\nabla \log\varrho, (\cdot)}$ is weighted divergence operator.

In the case of Killing graphs the solvability of the asymptotic Dirichlet problem depends on the geometry of $M$, on the warping function $\varrho$ and on the prescribed mean curvature $H$. In \cite{CHHL} the authors consider the same curvature assumptions \eqref{HoVaCurv2} and \eqref{HoVaCurv1} on $M$ as in \cite{CHR} and \cite{CHH2}. We point out that here it is necessary to assume that the warping function $\varrho$ is convex since otherwise the whole warped product space would not be a Cartan-Hadamard manifold, see \cite{BO}.

Depending on the curvature bounds on $M$ and the warping function $\varrho$, it is possible to find entire Killing graphs with prescribed mean curvature $H$ such that $H(x) \not\to 0$ as $r(x) \to \infty$. For example, the hyperbolic space $\HH^{n+1}$ with constant curvature $-1$, can be written as a warped product $\HH^{n+1} = \HH^n \times_{\cosh} \R$ and in this case the natural bound for the mean curvature function is
	\[
	|H|<1.
	\]
This comes from the fact that, in order to solve the Dirichlet problem, the prescribed mean curvature must be bounded from above by the mean curvature of the Killing cylinder over the domain.

To finish this section, we mention also the article \cite{BCKRT} that appeared after writing the first version of this survey. In \cite{BCKRT} Bonorino, Casteras, Klaser, Ripoll and Telichevesky study the asymptotic Dirichlet problem in $\HH^n \times\R$ with prescribed mean curvature $H\colon \HH^n\times\R \to \R$ depending also on the height variable.



\section{Different strategies to solve the asymptotic Dirichlet problem}

The proof of the solvability of the asymptotic Dirichlet problem is done in two steps. Namely, first one needs to obtain an entire solution, and secondly, show that this solution has the correct behaviour at infinity. The first step is easy (at least for the minimal graphs) in a sense that it follows from the known theory of PDE's: The Dirichlet problem for the minimal surface equation is solvable on bounded domains if the mean curvature of the boundary is nonnegative. To obtain an entire solution this can be used as follows.

Let $\theta\in C(\pinf M)$ be a continuous function defined on the asymptotic boundary. Fix a point $o\in M$ and extend $\theta$ to a function $\theta\in C(\bar M)$. Then for each $k\in\N$ there exists a solution $u_k \in C^{2,\alpha} (B(0,k)) \cap C(\bar B(o,k))$ to
	\[
	\begin{cases}
	\dv \frac{\nabla u_k}{\sqrt{1+|\nabla u_k|}} = 0 \quad \text{in } B(o,k) \\
	u_k|\p B(o,k) = \theta.
	\end{cases}
	\]
Applying interior gradient estimate in compact subsets of $M$ and using the diagonal argument, one finds a subsequence that converges locally uniformly with respect to $C^2$-norm to a solution $u$. Then the hard part is to show that $u$ extends continuously to $\pinf M$ and satisfies $u|\pinf M = \theta$, that is 
	\begin{equation}\label{eq-inf-val}
	\lim_{x\to x_0} u(x) = \theta(x_0)
	\end{equation}
	for any $x_0\in\pinf M$.
	In the case of operators mentioned in Section \ref{sec-other-op}, it is also necessary to obtain global height estimates to get the converging subsequence, since constants are no more solutions.

In order to complete the proof of the correct behaviour at infinity, a couple of different strategies have been applied. One possibility is to construct local barriers at infinity and use them to show that the solution must have the desired behaviour. Another possibility is to use more PDE related techniques, such as Caccioppoli and Poincar\'e-type inequalities, in order to obtain integral estimates and to show that the difference of the solution and boundary data is pointwise bounded by these integrals that tend to $0$ at infinity. It is this latter part where the geometry of the manifold, i.e. the curvature assumptions, play a key role.

Finally, the uniqueness of the solutions (for operators satisfying maximum principle) can be proved as follows. Assume that $u$ and $\tilde u$ are solutions, continuous up to the boundary, and $u=\tilde u$ on $\pinf M$. Assume that there exists $y\in M$ with $u(y) > \tilde u(y)$. Now denote $\delta = (u(y) - \tilde u(y))/2$ and let $U\subset \{x\in M \colon u(x) > \tilde u(x) +\delta\}$ be the component of $M$ containing the point $y$. Since $u$ and $\tilde u$ are continuous functions that coincides on the asymptotic boundary $\pinf M$, it follows that $U$ is relatively compact open subset of $M$. Moreover, $u = \tilde u +\delta$ on $\p U$, which implies $u = \tilde u+\delta$ in $U$. This is a contradiction since $y \in U$.


\subsection{Methods related to barriers}

First way to show \eqref{eq-inf-val} is to use local barriers at the given point $x_0$, i.e. to bound the values of the solution $u$ from above and from below by sub- and supersolutions that are approaching the desired value $\theta(x_0)$ when $x\to x_0$. The idea is as follows. Let $x_0\in \pinf M$ and $\ve>0$ be arbitrary. Then by the continuity of $\theta$ (the extended function in $\bar M$), there exists a neighbourhood $W$ of $x_0$ such that
	\[
	|\theta(x) - \theta(x_0)| < \ve/2
	\]
for all $x\in W$.

If we have a supersolution $\psi$ (in $W$) so that $\psi(x) \to 0$ as $x\to x_0$, the function $-\psi$ will be a subsolution and we aim to bound the solution $u$ in $W$ as
	\[
	-\psi(x) + \theta(x_0) -\ve \le u(x) \le \psi(x) +\theta(x_0) + \ve.
	\]
This follows since, by comparison principle, we can bound every solution $u_k$ (for $k$ large enough) to be between these barriers. Finally it follows that
	\[
	\limsup_{x\to x_0} |u(x) - \theta(x_0)| \le \ve
	\]
since $\lim_{x\to x_0} \psi(x) = 0$. Because $\ve$ and $x_0 \in\pinf M$ were arbitrary, this shows the claim.

\subsubsection{Strict convexity condition.}

Motivated by Choi's \cite{choi} convex conic neighbourhood condition, that was used for the Laplacian, Ripoll and Telichevesky \cite{RT} introduced the following strict convexity condition to suit more general divergence form quasilinear elliptic PDE's.
	\begin{defin}
		A Cartan-Hadamard manifold $M$ satisfies the strict convexity condition (SC condition) if, given $x\in\pinf M$ and a relatively open subset $W\subset \pinf M$ containing $x$, there exists a $C^2$ open subset $\Omega\subset M$ such that $x\in \Int \pinf \Omega \subset W$, where $\Int \pinf\Omega$ denotes the interior of $\pinf\Omega$ in $\pinf M$, and $M\setminus \Omega$ is convex.
\end{defin}
They showed that if $M$ satisfies the SC condition and the sectional curvatures of $M$ are bounded from above by a negative constant, $K_M\le -k^2$, it is possible to use the distance function, $s\colon \Omega\to \R$, to $\p\Omega$ to construct barriers at $x$. They also proved that rotationally symmetric manifolds with $K_M\le -k^2$ and manifolds satisfying \eqref{HoVaCurv2} satisfy the SC condition.

In order to prove the SC condition under \eqref{HoVaCurv2}, Ripoll and Teliche\-vesky generalise former constructions of Anderson \cite{andJDG} and Borbély \cite{borbpams}. The idea is that since $K_M\le -k^2$, the principal curvatures of the geodesic spheres are at least $k$, and therefore it is possible to take out small pieces of the spheres such that the remaining set is still convex.
 
Later, Casteras, Holopainen and Ripoll \cite{CHR2} proved the SC condition under the curvature bounds  \eqref{CHRoptub} and \eqref{CHR-lowb}. In order to do this, they used slightly modified version of the local barrier function that will be introduced next.


\subsubsection{Barrier from an angular function.}

In \cite{HoVa} Holopainen and Vähä\-kangas solved the asymptotic Dirichlet problem for the $p$-Laplacian under very general curvature conditions on the manifold $M$. A key tool was a local barrier function at infinity that was constructed by generalising the ideas that go back to Anderson and Schoen \cite{andschoen} and to Holopainen \cite{Ho}. The idea was to take continuous function on the boundary $\pinf M$, extend it to the whole manifold and after a smoothening procedure obtain sub- and supersolutions that can be used as barriers. The clever idea in \cite{HoVa} was that the smoothening procedure depends also on the curvature lower bound, and this allowed the more general curvature conditions \eqref{HoVaCurv2} and  \eqref{HoVaCurv1}.

The barrier function obtained in \cite{HoVa} has appeared to be very flexible and suit also other quasilinear elliptic PDE's. The same barrier has been used to solve the asymptotic Dirichlet problem for a large class of operators in \cite{CHR}, for $f$-minimal graphs in \cite{CHH2}, and for Killing graphs in \cite{CHHL}. As mentioned in the previous section, modified version of this barrier was used also in \cite{CHR2}.

We point out that since the smoothening procedure in \cite{HoVa} depends on the curvature bounds, the computations become very long and technical. However, to give a very brief idea, we describe some steps. The barrier function is constructed under curvature bounds
	\[
	-(b\circ r)^2(x) \le K(P_x) \le -(a\circ r)^2(x),
	\]
where $r$ is the distance to a fixed point $o\in M$ and $a,b\colon [0,\infty) \to [0,\infty)$ are smooth functions satisfying certain conditions (see \cite{HoVa}). These curvature bounds are used to control the first two derivatives of the ``barrier" that is constructed for each boundary point $x_0\in\pinf M$. 

The construction starts from an angular function $h$ that is defined on the boundary $\pinf M$, then extended inside the manifold, smoothened by integrating against certain kernel, and appropriately normalised. We denote this smooth function still by $h$. Then adding this function $h$ to the distance function $r$ with certain negative power and multiplying by a constant we will obtain a function $\psi$ that is a supersolution in a sufficiently small neighbourhood of $x_0$ in the cone topology. Finally this supersolution can be used to construct local barriers in that neighbourhood.


\subsubsection{Rotationally symmetric manifolds.}\label{intro_rotsym}

 On rotationally symmetric manifolds, with the metric
 	\[
	g^2 = dr^2 + f(r)^2 d\vartheta^2,
	\]
the solvability of the asymptotic Dirichlet problem under the optimal curvature bounds of Theorem \ref{thm1.5} is obtained by proving first the following integral condition that can be used to construct global barrier functions. 
 \begin{thm} \label{rotsymmingraph}
Assume that 
	\begin{equation}\label{intcond}
	\int_1^{\infty}\Big( f(s)^{n-3} \int_s^{\infty} f(t)^{1-n} dt \Big) ds < \infty.
	\end{equation}
	Then there exist non-constant bounded solutions of the minimal surface equation and, moreover,
	the asymptotic Dirichlet problem for the minimal surface equation is uniquely solvable for any 
	continuous boundary data on $\pinf M$.
\end{thm}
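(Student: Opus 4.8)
The plan is to run the two-step scheme of Section~3, using the rotational symmetry only in the construction of the barriers. First observe that the existence of non-constant bounded solutions is a formal consequence of unique solvability: solving the problem with any non-constant $\theta\in C(\pinf M)$ produces, by the maximum principle, an entire solution taking values between $\min\theta$ and $\max\theta$ and realising $\theta$ at infinity, hence non-constant and bounded. So it suffices to solve the asymptotic Dirichlet problem. For the entire solution I would argue exactly as in Section~3: fix a continuous extension of $\theta$ to $\bM$, solve \eqref{mingraph-eq} on each ball $B(o,k)$ with this boundary data, and use interior gradient estimates on compact sets together with a diagonal argument to extract a subsequence converging in $C^2_{\loc}$ to an entire solution $u$; since constants solve \eqref{mingraph-eq}, no global height estimate is needed. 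The whole difficulty is then to prove $\lim_{x\to x_0}u(x)=\theta(x_0)$ for every $x_0\in\pinf M$, which by the barrier scheme of Section~3.1 reduces to producing, for each $x_0$ and each $\ve>0$, a supersolution $\psi$ on a truncated cone $T(v,\al,R)$ about $x_0$ with $\psi(x)\to0$ as $x\to x_0$ and $\psi\ge\osc\theta$ on the remaining compact part of $\partial T$.

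The role of \eqref{intcond} is to guarantee that the slowest-decaying nonconstant mode is bounded, and it is this mode that the barrier is built from. Writing $M=\R^n$ with $g=dr^2+f(r)^2d\vartheta^2$, the convergence of the inner integral in \eqref{intcond} makes
\[
A(r)\coloneqq\int_r^\infty f(t)^{1-n}\,dt
\]
a well-defined positive function decreasing to $0$; it is the decay profile of the radial solutions, since for $u=u(r)$ equation \eqref{mingraph-eq} integrates once to $f^{n-1}u'/\sqrt{1+u'^2}=c$, giving $u'\sim c\,f^{1-n}$ and $u(r)-u(\infty)\sim-c\,A(r)$. A purely radial function is the same in every direction, however, so it cannot separate $x_0$ from the rest of $\pinf M$; one must use the degree-one mode, whose radial profile $\zeta$ solves the linear equation
\[
(f^{n-1}\zeta')'=(n-1)f^{n-3}\zeta
\]
(the radial part of the harmonic function $\zeta(r)Y_1(\vartheta)$, with $Y_1$ a first spherical harmonic). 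Integrating by parts and using Fubini,
\[
\int_1^\infty f(s)^{n-3}\int_s^\infty f(t)^{1-n}\,dt\,ds=\int_1^\infty f(t)^{1-n}\int_1^t f(s)^{n-3}\,ds\,dt,
\]
and the right-hand side is exactly the integrability that forces the solution $\zeta$ normalised by $\zeta(0)=0,\ \zeta'(0)=1$ to remain bounded and increasing to a finite limit $\zeta(\infty)$ as $r\to\infty$. Thus \eqref{intcond} is equivalent to the existence of a bounded non-constant degree-one harmonic function, which is precisely March's threshold \cite{march} and explains the appearance of the double integral.

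Granting a bounded $\zeta$, I would take as barrier a modification of the genuine bounded harmonic function $v(x)=\zeta(r)Y_1$ aligned with $x_0$, for instance $\psi=\lambda\bigl(\zeta(\infty)-\zeta(r)\cos\varphi\bigr)$ plus a small radial supersolution correction, where $\varphi$ is the angular distance to $x_0$ and $\lambda=\lambda(\ve,\al)$. By construction $\psi\ge0$, with $\psi\to0$ as $\varphi\to0$ and $r\to\infty$, while on the lateral boundary $\{\varphi=\al\}$ one has $\psi\ge\lambda\zeta(\infty)(1-\cos\al)$, which can be made to dominate $\osc\theta$. It remains to check $\dv\bigl(\nabla\psi/\sqrt{1+|\nabla\psi|^2}\bigr)\le0$. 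In warped-product coordinates the operator splits into a radial part $f^{1-n}\partial_r\bigl(f^{n-1}\psi_r/W\bigr)$ and an angular part carrying a factor $f^{-2}$, with $W=\sqrt{1+\psi_r^2+f^{-2}|\nabla_\vartheta\psi|^2}$. Because $v$ is harmonic, $\dv(\nabla v/\sqrt{1+|\nabla v|^2})$ is quadratically small in $\nabla v$ (it comes only from replacing $W^{-1}$ by $1$), and this error is of lower order and can be absorbed by the radial correction; this is where the boundedness of $\zeta$, i.e. \eqref{intcond}, is actually used. Once the barrier is in place, Section~3.1 gives $\limsup_{x\to x_0}|u(x)-\theta(x_0)|\le\ve$ and, letting $\ve\to0$, the boundary values $u|\pinf M=\theta$; uniqueness is the comparison argument already given in Section~3.

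The main obstacle is this last supersolution estimate. The quasilinear denominator $W$ couples the radial and angular derivatives, so one cannot simply add a radial supersolution to the harmonic angular profile and conclude; the gradient-quadratic error terms produced by $W$ must be controlled, and it is in dominating them by the radial correction that the exact balance of the weight $f^{n-3}$ against $A=\int f^{1-n}$ in \eqref{intcond} becomes the sharp hypothesis. A secondary technical difficulty is the $\cot\varphi$ singularity of polar coordinates on $\Ss^{n-1}$ near the axis through $x_0$, where the angular Laplacian of the profile must be handled carefully, and the region where $|\nabla\psi|$ is not small, which requires the full nonlinear form of the estimate rather than its harmonic linearisation. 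Optimality of the constants in Theorem~\ref{thm1.5}, and hence the sharpness of \eqref{intcond}, matches March's if-and-only-if theorem for harmonic functions.
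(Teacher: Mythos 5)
Your skeleton coincides with the paper's: exhaustion over balls $B(o,k)$ plus interior gradient estimates for the entire solution, Fubini to rewrite \eqref{intcond} as $\int_1^\infty f(t)^{1-n}\int_1^t f(s)^{n-3}\,ds\,dt<\infty$, and a barrier argument in polar coordinates whose radial profile is governed by exactly this double integral (your $\zeta$ and the paper's radial function encode the same condition). But the barrier itself is genuinely different. You build \emph{local} barriers at each $x_0$ from the bounded degree-one harmonic mode $\zeta(r)\cos\varphi$, $(f^{n-1}\zeta')'=(n-1)f^{n-3}\zeta$, plus an unspecified radial correction. The paper instead builds a single \emph{global} barrier directly from the boundary data: it extends a smooth nonconstant $b$ on $\Ss^{n-1}$ radially to $B(r,\vartheta)=b(\vartheta)$, sets $\eta(r)=k\int_r^\infty f(t)^{1-n}\int_1^t f(s)^{n-3}\,ds\,dt$, shows $\eta+B$ is a supersolution of \eqref{mingraph-eq} outside $B(o,r_0)$ for $r_0$ and then $k\ge\|b\|_{C^2}$ large, caps it by the constant $a=\min_{\p B(o,r_0)}(\eta+B)$ to obtain $w$ continuous on $\bar M$ with $w|\pinf M=b$, repeats with $-\eta$ for the lower barrier, and handles general continuous $\theta$ by approximation. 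This yields the boundary values in one stroke, with no truncated cones, no tuning of a multiplicative constant against $\osc\theta$ on lateral boundaries, and no $\cot\varphi$ axis singularity (the angular function is an arbitrary smooth $b$, not $\cos\varphi$).

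The difference is not cosmetic: it is located exactly at the step you yourself flag as ``the main obstacle'' and then leave unproved. Your $\psi$ is (modulo the unspecified correction) harmonic, so the term $\Delta\psi/W$ contributes nothing and the sign of $\dv(\nabla\psi/W)$ rides entirely on the quasilinear term $\ang{\nabla\psi,\nabla(1/W)}$, which has no sign; asserting that it ``can be absorbed by the radial correction'' is the substance of the theorem, not a remark. Note moreover that multiplying by a large $\lambda$ to dominate $\osc\theta$ does not preserve supersolutions of \eqref{mingraph-eq}, since the operator is not homogeneous, so $\lambda$ feeds back into the very error terms the correction must beat. The paper's construction is engineered to avoid this circularity: since $f^{n-1}\eta'=-k\int_1^r f(s)^{n-3}\,ds$, one gets $\Delta\eta=-kf^{-2}$ exactly, while $\Delta B=f^{-2}\Delta^{\Ss}b$, so $\Delta(\eta+B)\le-(k-c\|b\|_{C^2})f^{-2}$ is \emph{strictly} negative with a definite $f^{-2}$ rate, and it is this linear-term margin that dominates the gradient-quadratic errors produced by $W$ for $r$ large --- the ``somewhat long computation'' yielding \eqref{etaBestim}. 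In short: your route is plausible and close in spirit, but as written it stops precisely where the proof has to be carried out, whereas the paper's choice of $\eta+B$ makes that step mechanical by putting the needed negativity into the Laplacian rather than hoping to extract it from the nonlinearity.
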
 

The condition \eqref{intcond} was earlier considered by March in \cite{march} where, by studying the behaviour of the Brownian motion, he proved that entire bounded nonconstant harmonic functions exist if and only if \eqref{intcond} is satisfied. Similar conditions appeared also in \cite{milnor} and \cite{choi}. We will sketch the proof for the minimal surface equation from \cite{CHH3}. How the condition \eqref{intcond} gives the curvature bounds of Theorem \ref{thm1.5} can be found from \cite{march}.

\begin{proof}[Proof of Theorem \ref{rotsymmingraph} (Sketch)]

Changing the order of integration, the condition \eqref{intcond} reads
        \begin{equation}\label{intcond2}
                \int_1^\infty \frac{\int_1^t f(s)^{n-3} ds}{f(t)^{n-1}} dt <\infty.
        \end{equation}
Now interpret $\pinf M$ as $\Ss^{n-1}$ and let $b\colon \Ss^{n-1} \to \R$ be a smooth non-constant
function and define $B\colon M \setminus\{o\} \to \R$,
\[
 B(\exp(r\vartheta))=B(r,\vartheta) = b(\vartheta), \, \vartheta \in \Ss^{n-1}\subset T_oM.
 \] 
 Define also
        \[
                \eta(r) = k \int_r^\infty f(t)^{-n+1} \int_1^t f(s)^{n-3} ds dt,
        \]
with $k>0$ to be determined later, and note that by the assumption \eqref{intcond2} $\eta(r) \to 0$ as $r\to\infty$. The idea in the proof is to use the functions $\eta$ and $B$, and condition \eqref{intcond2} to construct global barrier functions.

The minimal surface equation for $\eta + B$ can be written as
    \begin{align}\label{mineqetaB}\begin{split}
      \dv \frac{\nabla(\eta+B)}{\sqrt{1+|\nabla(\eta+B)|^2}} &=
        \frac{\Delta (\eta+B)}{\sqrt{1+|\nabla(\eta+B)|^2}} \\
     &\qquad + \ang{\nabla(\eta+B), \nabla\Big(\frac{1}{\sqrt{1+|\nabla(\eta+B)|^2}}\Big)},\end{split}
    \end{align}
and we want to estimate the terms on the right hand side from above. 

An important fact is that on the rotationally symmetric manifolds the Laplace operator can be written as
    \begin{equation}\label{laplacepolar}
     \Delta = \frac{\p^2}{\p r^2} + (n-1) \frac{f'\circ r}{f\circ r} \frac{\p}{\p r} + 
     \frac{1}{(f\circ r)^2} \Delta^{\Ss},
    \end{equation}
where $\Delta^{\Ss}$ is the Laplacian on the unit sphere $\Ss^{n-1}\subset T_oM$, and for the gradient of a function $\varphi$ 
we have
    \begin{equation}\label{gradientpolar}
     \nabla \varphi = \frac{\p \varphi}{\p r} \frac{\p}{\p r} + \frac{1}{f(r)^2}\nabla^{\Ss}\varphi 
    \end{equation}
and
    \[
     |\nabla \varphi|^2 = \varphi_r^2 + f^{-2} |\nabla^{\Ss}\varphi|^2.
    \]
Here $\nabla^{\Ss}$ is the gradient on $\Ss^{n-1}$, $|\nabla^{\Ss}\varphi|$ denotes the norm of $\nabla^{\Ss}\varphi$ with respect to the Euclidean metric on $\Ss^{n-1}$,
 and $\varphi_r=\partial\varphi/\partial r$.

Therefore using \eqref{mineqetaB}, \eqref{laplacepolar} and \eqref{gradientpolar} we obtain, by somewhat long computation, 
        \begin{equation}\label{etaBestim}
        		\dv \frac{\nabla(\eta+B)}{\sqrt{1+|\nabla(\eta+B)|^2}} \le 0
	\end{equation} 
when we choose $r$ large enough and then $k \ge ||b||_{C^2}$ large enough. In particular, $\eta + B$ is a supersolution to the minimal surface equation in $M\setminus B(o,r_0)$ for some $r_0$.

To obtain a global upper barrier choose $k$ so that \eqref{etaBestim} holds and $\eta  > 2 \max |B|$ on the geodesic sphere $\p B(o,r_0)$. Then $a \coloneqq \min_{\p B(o,r_0)} (\eta +  B) >  \max B$. Since $\eta(r)\to 0$ as $r\to\infty$, the function 
        \[
                w(x) \coloneqq \begin{cases}
                        \min\{(\eta +  B)(x) , a \} &\text{ if } x\in M \setminus B(o,r_0); \\
                        a &\text{ if } x\in B(o,r_0)
                \end{cases}
        \]
is continuous in $\bar{M}$ and coincide with $b$ on $\pinf M$. Global lower barrier $v$ can be obtained similarly by replacing $\eta$ with $-\eta$. Then $v\le B\le w$ by construction and the same will hold also for the solution $u$. In the end, continuous boundary values can be handled by approximation.

\end{proof}


\subsection{Sobolev and Poincar\'e inequalities with Moser iteration}

The method of proving the correct boundary values at infinity without barriers goes back to \cite{cheng} where Cheng considered the asymptotic Dirichlet problem for the harmonic functions. Cheng's approach was modified by Vähäkangas in \cite{Va1} and \cite{Va2} for the case of $\mathcal{A}$-harmonic functions. It turned out that Cheng's and Vähäkangas' proofs could be further developed to suit also the minimal surface equation and weaker assumptions on the curvature (\cite{CHH1}, \cite{CHR1}). We will describe some steps of the proof of Theorem \ref{pinch_maintheorem}; the proof of Theorem \ref{CHR-op-thm} follows similar steps although details differ due to the different curvature assumptions.

Within this approach we will deal with weak solutions of the minimal surface equation \eqref{mingraph-eq} that are defined as follows.
Let $\Omega\subset M$ be an open set. Then a function $u\in W^{1,1}_{\loc}(\Omega)$ is a
 \emph{(weak) solution of the minimal surface equation} if
    \begin{equation}\label{MinEqWeak}
      \int_{\Omega} \frac{\ang{\nabla u,\nabla \varphi}}{\sqrt{1+\abs{\nabla u}^2}} = 0
    \end{equation}
 for every $\varphi\in C_0^{\infty}(\Omega)$. Note that the integral is well-defined since
    \[
      \sqrt{1+\abs{\nabla u}^2} \ge \abs{\nabla u} \quad \text{a.e.},
    \]
 and thus
    \[
      \int_{\Omega} \frac{\abs{\ang{\nabla u,\nabla \varphi}}}{\sqrt{1+\abs{\nabla u}^2}} \le
       \int_{\Omega} \frac{\abs{\nabla u}\abs{\nabla \varphi}}{\sqrt{1+\abs{\nabla u}^2}} \le
       \int_{\Omega} \abs{\nabla \varphi} < \infty.
    \]

Now let $\theta$ be the boundary data function that is extended inside the manifold $M$ and let $u$ be the solution obtained via the method mentioned in the beginning of this section. In order to show \eqref{eq-inf-val}, we construct a certain smooth homeomorphism $\varphi\colon[0,\infty) \to [0,\infty)$ (see \eqref{psi-phi-constr}) and prove first a pointwise estimate
	\begin{equation}\label{eq-pt-est}
      \sup_{B(x,s/2)} \varphi \big(\abs{u-\theta}/\nu\big)^{n+1} \le c\int_{B(x,s)} \varphi 
      \big(\abs{u-\theta}/\nu\big),
	\end{equation}
where $c$ and $\nu$ are constants. Second step is to show that the integral on the right hand side of \eqref{eq-pt-est} tends to $0$ as $r(x)\to\infty$, which then implies that the pointwise difference of the boundary data and the solution tends to $0$. This second step follows from an integral estimate (Poincar\'e-type inequality)
	\begin{equation}\label{eq-int-est}
	\int_{M} \varphi(|u-\theta|/\nu) \le c + c\int_{M} F(r |\nabla\theta|) + 
	  c\int_{M} F_1( r^2 |\nabla\theta|^2) < \infty.
	\end{equation}
Namely, let now $(x_i)$ be a sequence of points such that $x_i\to x_0 \in \pinf M$. Since the integral of $\varphi$ over the whole manifold is finite by \eqref{eq-int-est}, we must have
	\[
	\int_{B(x_i,s)} \varphi (|u-\theta|/\nu) \to 0
	\]
as $x_i \to x_0$ and \eqref{eq-inf-val} follows.

Key tools to obtain the pointwise estimate \eqref{eq-pt-est} are the Sobolev inequality (see e.g. \cite{HofSp})
	\begin{equation}\label{sobolev_eq}
	\left(\int_{B(x,r_s)} \abs{\eta}^{n/(n-1)}\right)^{(n-1)/n} \le C_s \int_{B(x,r_s)} \abs{\nabla \eta},
	\end{equation}
	$\eta\in C_0^\infty(B(x,r_s))$,
and a Caccioppoli-type inequality \cite[Lemma 3.1]{CHH1} 
	\begin{multline} \label{caceq1}
  \int_B \eta^2 \varphi'(|u-\theta|/\nu) \frac{\abs{\nabla u}^2}{\sqrt{1+\abs{\nabla u}^2}} 
  \le C_{\ve} \int_B \eta^2 \varphi'(|u-\theta|/\nu) \abs{\nabla \theta}^2  \\+  (4+\ve)\nu^2 \int_B
  \frac{\varphi^2}{\varphi'}(|u-\theta|/\nu) \abs{\nabla \eta}^2,
  \end{multline}
  where $\eta\ge0$ is a $C^1$ function. We note that the inequality  \eqref{caceq1} can be proved by using
    \[
      f = \eta^2 \varphi\left( \frac{(u-\theta)^+}{\nu}\right) - \eta^2 \varphi \left(
      \frac{(u-\theta)^-}{\nu} \right)
    \]
as a test function in the weak form of the minimal surface equation \eqref{MinEqWeak}. These inequalities can be then used to run a Moser iteration process that yields the estimate \eqref{eq-pt-est}. The idea of the Moser iteration, that goes back to \cite{Moser61} and \cite{Serrin64} (see also \cite{GT}), is that if one has an estimate for $L^p$-norms, i.e. $L^{p'}$-norm can be suitably bounded by $L^{p}$-norm when $p'\ge p$, then it is possible to iterate this estimate and obtain a bound for the $L^\infty$-norm in terms of the $L^p$-norm with finite $p$. 
In our case we first obtain an estimate that can be written as a recursion formula $I_{j+1} \le C^{1/\kappa^j} \kappa^{j/\kappa^j} I_j$, where
	\[
	I_j = \left( \int_{B_j} \varphi(h)^{m_j} \right)^{1/\kappa^j},
	\]
$\kappa= n/(n-1)$, $j\in\N$, $m_j=(n+1)\kappa^j -n$ and the radii of the balls $B_j$ converge to $s/2$ as $j\to \infty$, and this finally yields \eqref{eq-pt-est}.

The Caccioppoli inequality \eqref{caceq1}, together with the curvature bound, play a central role also in the proof of the integral estimate \eqref{eq-int-est}. Another essential part is the Young's inequality
	\[
	ab \le F(a) + G(b)
	\]
for special type of Young functions $F$ and $G$ that are constructed as follows. Let $H\colon [0,\infty) \to [0,\infty)$ be a certain homeomorphism (for details, see \cite[Section 2.3]{CHH1}) and define $G(t) = \int_0^t H(s) ds$ and $F(t) = \int_0^t H^{-1}(s) ds$. Then
	\begin{equation}\label{psi-phi-constr}
	\psi(t) = \int_0^t \frac{ds}{G^{-1}(s)} \quad \text{and} \quad \varphi = \psi^{-1}
	\end{equation}
are homeomorphisms so that $G\circ\varphi' = \varphi$. Another pair of Young functions $F_1$ and $G_1$ are constructed similarly, and so that $G_1\circ \varphi'' \approx \varphi$. The integrability of the functions $F$ and $F_1$ in \eqref{eq-int-est} follows from the construction and the curvature assumptions on $M$. 


\section{Non-solvability of the asymptotic Dirichlet problem}\label{intro_nonex}

From the classical results on Bernstein's problem we know that if the graph of $u\colon\R^n\to \R$ is a minimal surface in $\R^{n+1}$, then $u$ is affine for $n\le 7$, and so a bounded solution must be constant. Therefore it is clear, that if we wish to solve the asymptotic Dirichlet problem with any continuous boundary data, the curvature cannot be zero everywhere.
On the other hand, the discussion about the rotationally symmetric case and the theorems replacing the sectional curvature lower bound with the pinching condition \eqref{pinchassump} raise a question about the necessity of the curvature lower bound. It turns out that for the solvability of the asymptotic Dirichlet problem for \eqref{mingraph-eq} on general non-rotationally symmetric Cartan-Hadamard manifold, some control on the negativity of the curvature is necessary (e.g. the condition \eqref{pinchassump} or a curvature lower bound).

The first result in this direction was proved in 1994 by Ancona in \cite{ancrevista} where he constructed a manifold with $K_M\le -1$ so that the Brownian motion almost surely exits $M$ at a single point  on the asymptotic boundary, and therefore the asymptotic Dirichlet problem for the Laplacian is not solvable.
Borbély \cite{Bor} constructed similar manifold using analytic arguments and later Holopainen and Ripoll \cite{HR_ns} generalised Borbély's example to cover also the minimal surface equation by proving the following. 
\begin{thm}
There exists a $3$-dimensional Cartan-Hadamard manifold $M$ with sectional curvatures $\le-1$ such that the asymptotic Dirichlet problem for the minimal surface equation \eqref{mingraph-eq} is not solvable with any continuous nonconstant boundary data, but there are nonconstant bounded continuous solutions of \eqref{mingraph-eq} on $M$. 
\end{thm}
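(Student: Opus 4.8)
The plan is to take for $M$ the $3$-dimensional Cartan-Hadamard manifold constructed by Borbély and to carry the comparison argument known for the Laplacian over to the nonlinear minimal surface operator $Q[u]=\dv\bigl(\nabla u/\sqrt{1+|\nabla u|^2}\bigr)$; this transfer is the contribution of Holopainen and Ripoll. First I would set up the manifold. Following Borbély, one modifies a warped-product model metric in a tubular neighbourhood of a fixed geodesic ray $\gamma$, with endpoint $\xi_0 \coloneqq \gamma(\infty)\in\pinf M$, so that the sectional curvatures become strongly negative away from $\gamma$ while $K_M\le -1$ is preserved everywhere. Geometrically this bends geodesics toward $\gamma$ and funnels the geometry toward the single ideal point $\xi_0$. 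Keeping track of the bound $K_M\le -1$ through the modification, via the curvature formula \eqref{rotsym_sect_for} and its warped-product analogue in \cite{BO}, is the first technical task.

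The core object is a bounded $C^2$ \emph{supersolution} $v$ of \eqref{mingraph-eq}, adapted to $\gamma$ and defined outside a compact set, whose boundary behaviour singles out $\xi_0$: one has $v(x)\to 0$ as $x\to\xi_0$, while $v$ stays uniformly positive along every approach to any other ideal point. Here the minimal surface equation differs essentially from the harmonic case handled by Ancona and Borbély. For the Laplacian it suffices to control $\Delta v$, but for $Q$ one must also handle the gradient through the factor $(1+|\nabla v|^2)^{-1/2}$, so the full Hessian and gradient of $v$ enter and one must verify that $Q[v]\le 0$. The strongly negative curvature away from $\gamma$ is precisely what forces the gradient terms to carry the right sign, via Hessian comparison against the distance function to $\gamma$. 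I expect this verification to be the main obstacle of the whole proof.

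Granting such a $v$, non-solvability follows from the comparison principle for $Q$, the same one used for uniqueness in Section 3. The claim to establish is that \emph{every} solution $u\in C(\bar M)$ of $Q[u]=0$ has constant trace on $\pinf M$. Indeed, comparing $u$ near $\xi_0$ with functions of the form $c\pm\ve\pm Cv$ (where $c$ is the limit of $u$ at $\xi_0$) and exploiting the funnelling geometry, one shows that the continuous trace $u|\pinf M$ cannot distinguish $\xi_0$ from any other ideal point, hence is constant; by the maximum principle $u$ is then constant on $M$. Consequently the asymptotic Dirichlet problem admits a solution continuous up to $\pinf M$ only for constant boundary data, so it fails for every nonconstant $\theta\in C(\pinf M)$.

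Finally, to produce a nonconstant bounded solution I would again use the exhaustion recalled at the start of Section 3: solve the Dirichlet problems on the balls $B(o,k)$ (solvable, since the balls are mean convex) with suitable boundary data detecting $\xi_0$, and extract by interior gradient estimates and a diagonal argument a locally uniformly $C^2$-convergent subsequence, giving an entire solution $u_\infty$. Comparison with the bounded barriers $v$ and $-v$ keeps $u_\infty$ bounded, and the separating behaviour of $v$ at $\xi_0$ versus the other ideal points prevents $u_\infty$ from reducing to a constant. By the previous paragraph this $u_\infty$ cannot extend continuously to $\pinf M$; its nonconstancy is an interior phenomenon and its boundary trace is discontinuous, which is exactly why its existence does not contradict the non-solvability of the asymptotic Dirichlet problem.
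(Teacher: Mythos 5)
Note first that the survey itself contains no proof of this theorem: it is quoted from Holopainen--Ripoll \cite{HR_ns}, who generalised Borb\'ely's construction \cite{Bor}, so your attempt has to be measured against that source. Your choice of manifold and the two-part plan (nonsolvability via comparison, existence via exhaustion) match the outline, but the central mechanism in your second and third paragraphs is oriented the wrong way, and this is a genuine gap. A supersolution $v$ with $v(x)\to 0$ as $x\to\xi_0$ and $v$ uniformly positive at every other ideal point is the standard tool for proving \emph{solvability at $\xi_0$}: comparing $u$ with $c+\ve+Cv$ yields only that $u$ attains its limit at $\xi_0$, while at any $\eta\neq\xi_0$ the term $Cv$ is bounded away from zero and the inequality is vacuous, so no information about the trace at $\eta$ follows. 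What actually has to be shown is the reverse propagation: the value of $u$ at the distinguished point controls $u$ on \emph{all} of $M$ (probabilistically, Brownian motion exits at the single point $\xi_0$ almost surely, so any solution continuous on $\bar M$ is identically $\theta(\xi_0)$). Borb\'ely for the Laplacian, and Holopainen--Ripoll for the quasilinear class containing \eqref{mingraph-eq}, obtain this not from one barrier pinned at $\xi_0$ but from explicit global sub- and supersolutions built out of the distinguished coordinate of the construction (exploiting the convexity of its level sets), swept along an exhausting family of regions to force $\theta(\xi_0)-\ve\le u\le\theta(\xi_0)+\ve$ on all of $M$. Nothing in your sketch produces an estimate on $u$ away from $\xi_0$, so the conclusion ``the trace is constant'' does not follow from the stated properties of $v$.

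The same defect breaks your final step. Exhausting with boundary data ``detecting $\xi_0$'' and comparing with $\pm v$ cannot prevent the limit from being constant: in the harmonic analogue on the very same manifold, the exhaustion limit of solutions with continuous data $\theta$ is exactly the constant $\theta(\xi_0)$, because harmonic measure is the Dirac mass at $\xi_0$; your lower bound $u_\infty\ge 1-\ve$ near $\xi_0$ is perfectly consistent with $u_\infty\equiv 1$. To keep $u_\infty$ strictly below its supremum somewhere you would need an upper barrier at some $\eta\neq\xi_0$, and the nonexistence of such barriers is precisely the content of the nonsolvability half of the theorem. In \cite{HR_ns} the nonconstant bounded solution is instead trapped between an explicit ordered pair of bounded \emph{nonconstant} sub- and supersolutions with different asymptotic levels, and its nonconstancy is inherited from that trap rather than from boundary data. (Two smaller points: boundedness of $u_\infty$ needs no barriers at all, since comparison with constants on each ball gives $\sup|u_k|\le\sup|\theta|$; and the verification $Q[v]\le 0$ via Hessian comparison, which you single out as the main obstacle, is the routine part --- the real difficulty is designing the metric so that the sweeping sub-/supersolutions work simultaneously for the gradient-dependent flux of \eqref{mingraph-eq}.)
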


It is also worth mentioning two closely related results by Greene and Wu \cite{GWgap} that partly answer the question about the optimal curvature upper bound. Firstly, in \cite[Theorem 2 and Theorem 4]{GWgap} they showed that an $n$-dimensional, $n\neq2$, Cartan-Hadamard manifold with asymptotically nonnegative sectional curvature is isometric to $\R^n$. Secondly, in \cite[Theorem 2]{GWgap} they showed that an odd dimensional Riemannian manifold with a pole $o\in M$ and everywhere non-positive or everywhere nonnegative sectional curvature is isometric to $\R^n$ if $\liminf_{s\to\infty} s^2 k(s) = 0$, where $k(s) = \sup\{ |K(P_x)| \colon x\in M, \, d(o,x)=s, P_x\in T_xM \, \text{two-plane}\}$.
 
Above the asymptotically nonnegative sectional curvature means the following.
\begin{defin}\label{ansc-def}
Manifold $M$ has asymptotically nonnegative sectional curvature if there exists a continuous decreasing function $\lambda\colon [0,\infty) \to [0,\infty)$ such that
	\[
	\int_0^\infty s\lambda(s) \, ds < \infty,
	\]
and that $K_M(P_x) \ge - \lambda(d(o,x))$ at any point $x\in M$.
\end{defin}
In \cite{CHH3} Casteras, Heinonen and Holopainen proved the following under the conditions of Definition \ref{ansc-def}.
\begin{thm}
Let $M$ be a complete Riemannian manifold with asymptotically nonnegative sectional curvature and only one end. If $u\colon M\to\R$ is a solution to the minimal surface equation that is bounded from below and has at most linear growth, then it must be a constant.
\end{thm}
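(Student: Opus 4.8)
The plan is to lift the problem to the minimal graph $\Sigma=\{(x,u(x)):x\in M\}$ in the product $M\times\R$ and to combine the potential theory of $\Sigma$ with the Gauss equation. Two structural facts drive the argument. First, $\Sigma$ is the boundary of the subgraph of $u$ and is therefore area minimizing, hence stable; equivalently, the angle function $\Theta=\ang{N,\partial_t}=1/\sqrt{1+|\nabla u|^2}>0$ (with $N$ the unit normal) is a \emph{positive} solution of the Jacobi equation $\Delta_\Sigma\Theta+(|A|^2+\overline{\Ric}(N,N))\Theta=0$, where $A$ is the second fundamental form. Second, since $\partial_t$ is parallel in $M\times\R$ and $\Sigma$ is minimal, the height function $h\colon\Sigma\to\R$, $h(x,u(x))=u(x)$, is harmonic, and $|\nabla_\Sigma h|^2=1-\Theta^2\le1$. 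The hypotheses on $u$ say exactly that $h$ is bounded from below, so the theorem reduces to the assertion that a bounded-below harmonic function on $\Sigma$ must be constant.

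First I would extract the geometry forced by asymptotically nonnegative sectional curvature. Since $\lambda$ is decreasing and $\int_0^\infty s\lambda(s)\,ds<\infty$, one gets $s^2\lambda(s)\to0$; the Laplacian comparison then yields $r\,\Delta r\le(n-1)(1+o(1))$ and $\vol B(o,r)\le Cr^n$, while the curvature lower bound $-\lambda(r)$ decays to $0$. The global gradient estimate for the minimal surface equation \cite{RSS}, applied on balls $B(p,r(p)/2)$ where the curvature lower bound is $-\lambda(r(p)/2)\to0$ and where $0\le u\le a+br$ (after a vertical translation), gives $\sup_M|\nabla u|<\infty$; hence $\Sigma$ is bi-Lipschitz to $M$, has a single end, and satisfies $\vol_\Sigma B_\varrho\le C\varrho^n$.

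The heart of the matter is a Ricci lower bound on $\Sigma$ that decays at infinity. A direct computation from the Gauss equation, using minimality ($\tr A=0$) and the ambient bound $K(P)\ge-\lambda(r)$, gives for every unit tangent $e$
\begin{equation*}
\Ric_\Sigma(e,e)=\sum_{j}\ang{\bar R(e,e_j)e_j,e}-|A e|^2\ge-(n-1)\lambda(r)-|A e|^2 ,
\end{equation*}
so the sole obstruction to an asymptotically nonnegative Ricci bound on $\Sigma$ is the term $|A|^2$. This is where I expect the main difficulty, and where stability must be used: the existence of the positive Jacobi field $\Theta$ (equivalently, the stability inequality $\int_\Sigma|A|^2\phi^2\le\int_\Sigma|\nabla_\Sigma\phi|^2+(n-1)\int_\Sigma\lambda(r)\phi^2$) together with the Euclidean volume growth, the integrability of $s\lambda(s)$, and the single-end hypothesis has to be leveraged to absorb $|A|^2$ --- either by passing to the $\Theta^2$-weighted (drift) Laplacian on $\Sigma$, whose Bakry--\'Emery curvature is controlled by the Jacobi equation, or by showing that $|A|^2$ is small in the integrated sense needed for a Moser/De Giorgi--Nash--Moser Harnack inequality.

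Granting such an effective Ricci (or Bakry--\'Emery) lower bound of the form $\ge-(n-1)K(\varrho)$ on $B(p,2\varrho)\subset\Sigma$ with $K(\varrho)\to0$, I would finish with a Cheng--Yau gradient estimate applied to the positive harmonic function $w=h-\inf_\Sigma h+\ve$: it gives $|\nabla_\Sigma w|/w\,(p)\le C_n(\varrho^{-1}+\sqrt{K(\varrho)})$ on $B(p,2\varrho)$, and letting $\varrho\to\infty$ forces $\nabla_\Sigma w(p)=0$ at every $p$. Hence $w$, and therefore $h$ and $u$, is constant. The decisive --- and genuinely hard --- point is the control of $|A|^2$ in the Ricci lower bound: a one-sided curvature bound alone leaves this term uncontrolled, and it is precisely the interplay of stability, the integrable curvature decay $\int_0^\infty s\lambda(s)\,ds<\infty$, and the single end of $M$ that must be orchestrated to neutralise it.
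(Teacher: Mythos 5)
Your lift to the minimal graph $\Sigma\subset M\times\R$ starts from correct facts (area-minimality and stability of graphs, $\Theta>0$ a Jacobi field, harmonicity of the height function with $\abs{\nabla_\Sigma h}\le 1$), and your first substantive step --- the global bound $\sup_M\abs{\nabla u}<\infty$ from the Rosenberg--Schulze--Spruck gradient estimate \cite{RSS} on balls $B(x,r(x)/2)$, using $s^2\lambda(s)\to0$ and the linear growth --- coincides with the first step of the actual proof, which is in \cite{CHH3} (the survey only quotes the theorem). After that, however, there is a genuine gap which you flag but do not close, and which is not closable by the tools you name. Stability gives only an integral ($L^2$, against test functions) bound on $\abs{A}$, while your Gauss-equation estimate needs a \emph{pointwise} one; the Jacobi equation yields $\Delta_\Sigma\log\Theta=-\big(\abs{A}^2+\overline{\Ric}(N,N)\big)-\abs{\nabla_\Sigma\log\Theta}^2$, which controls only the \emph{trace} of $\Hess_\Sigma\log\Theta$, whereas a Bakry--\'Emery lower bound needs the full Hessian --- and in any case $h$ is not harmonic for the $\Theta^2$-weighted Laplacian, since $\ang{\nabla_\Sigma h,\nabla_\Sigma\Theta^2}\not\equiv0$, so that substitution breaks the other half of your argument. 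Worse, even the strongest pointwise curvature estimates one could hope to invoke in low dimensions (Schoen--Simon type, $\abs{A}\le C/\varrho$) are quantitatively insufficient: $\Ric_\Sigma\ge-(n-1)\lambda-C/\varrho^2$ fails the required integrability $\int^\infty s\,\mu(s)\,ds<\infty$ by exactly a logarithm, so you would need $\abs{A}=o(1/\varrho)$ with an integrable margin --- a Bernstein-type decay estimate essentially equivalent to the theorem being proved.

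The endgame also fails even if someone hands you an asymptotically nonnegative Ricci bound on $\Sigma$. In the Cheng--Yau estimate, $K(\varrho)$ is the Ricci lower bound on the \emph{whole} ball $B(p,2\varrho)$; as $\varrho\to\infty$ these balls swallow the compact core, so $K(\varrho)$ stabilises at the global bound of order $\lambda(0)$ rather than tending to $0$ (unless the curvature is globally nonnegative), and you only conclude $\abs{\nabla_\Sigma w}/w\le C_n\sqrt{\lambda(0)}$. This is not a repairable slip: asymptotically nonnegative curvature alone does \emph{not} imply the Liouville property --- a connected sum of two copies of $\R^n$, $n\ge3$, has asymptotically nonnegative curvature, two ends, and carries nonconstant bounded harmonic functions --- and, tellingly, your outline never actually uses the one-end hypothesis. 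In \cite{CHH3} one-endedness is irreplaceable: there one stays on $M$, notes that the gradient bound makes the minimal surface equation uniformly elliptic, and uses that asymptotically nonnegative curvature \emph{together with exactly one end} yields volume doubling and a Poincar\'e inequality, hence a scale-invariant Moser--Harnack inequality (in the spirit of the Grigor'yan--Saloff-Coste stability results for Harnack inequalities); applying Harnack to $u-\inf_M u$ together with the maximum principle then forces constancy. To salvage your route you would have to replace Cheng--Yau by this Harnack mechanism on the one-ended manifold $\Sigma$ (quasi-isometric to $M$ thanks to the gradient bound) \emph{and} genuinely prove the super-linear decay of $\abs{A}$; the proposal supplies neither.
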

Compared to the results of Greene and Wu, here $M$ does not need to be simply connected and the curvature is allowed to change sign. Similar result was proved also in \cite{RSS} but assuming nonnegative Ricci curvature. To end the discussion, we point out that for example the curvature lower bound
	\begin{equation*}\label{ansc-bound}
	 K(P_x) \ge -\frac{C}{r(x)^2 \big(\log r(x)\big)^{1+\ve}}, \quad C>0,
	\end{equation*}
satisfies Definition \ref{ansc-def}, and this should be compared to the bounds \eqref{CHRoptub} and those of Theorem \ref{thm1.5}.



\end{document}